\documentclass[10pt]{article}

\usepackage[a4paper,margin=0.7in]{geometry}
\usepackage{amsmath,amssymb,amsthm}
\usepackage{microtype}

\newtheorem{proposition}{Proposition}
\newtheorem{theorem}{Theorem}
\newtheorem{lemma}[theorem]{Lemma}

\newcommand{\SDD}{\operatorname{SDD}}

\title{Corrigendum to ``On the Maximum Symmetric Division Deg Index of
$k$-Cyclic Graphs [J. Math. 2022, (2022) \#7783128]''}
\author{Abeer M. Albalahi and Akbar Ali\footnote{Corresponding author: akbarali.maths@gmail.com}\\[2mm]
Department of Mathematics, Faculty of Science,\\ University of Ha\!'il, Ha\!'il, Saudi Arabia}
\date{}

\begin{document}

\maketitle

\begin{abstract}
The proof of Proposition 1 in the paper [J. Math. 2022, (2022) \#7783128] contains a gap, but its statement is correct. Here, we fix this gap.

\end{abstract}

\section{Complete Proof of Proposition~1}

 The degree
of a vertex $u$ in a graph $G$ is denoted by $d_u(G)$, or simply by $d_u$ when there is no
danger of confusion.
The set $N_G(u)\cup \{u\}$ is denoted by $N_G[u]$, where $N_G(u)= \{x\in V(G):xu\in E(G)\}$. The symmetric division
deg index of a graph $G$ is defined as
$\SDD(G)=\sum_{uv\in E(G)}\Psi\bigl(d_u,d_v\big),$
where $\Psi(x,y)=\frac{x^2+y^2}{xy}$.
A connected graph with $n$ vertices is said to be a
$k$-cyclic graph if it has $n-1+k$ edges for some non-negative integer $k$. We follow \cite{AlbalahiAli2022} for other notation and terminology.

In the proof of Proposition~1 of \cite{AlbalahiAli2022} (see Equation (3) there), it was claimed that every term in the following summation is positive by Lemma 1 of \cite{AlbalahiAli2022}:
\begin{align*}
&\sum_{i=1}^r\big[\Psi\bigl(d_v+r,d_{v_i'}\big)
-\Psi\bigl(d_{v'},d_{v_i'}\big)\big]=\sum_{i=1}^r
\frac{\bigl(d_v+r-d_{v'}\big)
\bigl(d_{v'}[d_v+r]-[d_{v_i'}]^2\big)}
{d_{v_i'}d_{v'}[d_v+r]}.
\end{align*}
However, Lemma 1 of \cite{AlbalahiAli2022} requires $d_{v'}\ge d_{v'_i}$, which may or may not hold in the proposition under discussion.  Therefore, the claimed positivity
does not follow. To fix this gap, we use the following lemma derived from the proof of Proposition~1 in \cite{AlbalahiAli2022}.

\begin{lemma}\label{lem:transfer}
Let $v\in V(G)$ be a vertex of maximum degree. Let $vw\in E(G)$ and let
\[
\{w_1,\ldots,w_s\}
=\{x\in N_G(w)\setminus N_G[v]:d_x\le d_w\}\ne\emptyset.
\]
If $G'$ is the graph formed from $G$ by removing the edges $ww_1,\ldots,ww_s$ and adding the edges $vw_1,\ldots,vw_s$,
then $\SDD(G)-\SDD(G')<0$.
\end{lemma}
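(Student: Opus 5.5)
The plan is a direct edge-by-edge comparison of $\SDD(G')$ and $\SDD(G)$. Write $\Delta=d_v$ and $\Psi(a,b)=a/b+b/a$. The only vertices whose degrees change are $v$ and $w$. Since no $w_i$ lies in $N_G[v]$, each added edge $vw_i$ is new, so $d_v(G')=\Delta+s$. Also $d_w(G')=d_w-s\ge 1$, because $v$ is a neighbour of $w$ that is not among the $w_i$. Every edge not incident to $v$ or $w$ contributes equally to both indices. The remaining edges split into five groups, and I will show that the total change $\SDD(G')-\SDD(G)$ is strictly positive.

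The basic tool is that $t\mapsto t+1/t$ is increasing for $t\ge 1$. Consequently $\Psi(a,b)$ is increasing in $a$ whenever $a\ge b$, and decreasing in $a$ whenever $a\le b$. The five groups are handled as follows.

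\emph{(i) The edge $vw$.} It changes from $\Psi(\Delta,d_w)$ to $\Psi(\Delta+s,d_w-s)$. Since $\Delta\ge d_w$, we have $(\Delta+s)/(d_w-s)>\Delta/d_w\ge 1$, so this term strictly increases.

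\emph{(ii) The moved edges.} Each $ww_i$ is replaced by $vw_i$, changing $\Psi(d_w,d_{w_i})$ to $\Psi(\Delta+s,d_{w_i})$. Here $\Delta+s>d_w\ge d_{w_i}$, so this is an increase.

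\emph{(iii) Edges $vx$ with $x\ne w$.} Since $\Delta$ is the maximum degree, $\Delta\ge d_x$. Raising the degree of $v$ therefore increases $\Psi$, by the amount $s/d_x-s\,d_x/(\Delta(\Delta+s))>0$.

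\emph{(iv) Edges $wy$ with $y\ne v$ and $y$ not a $w_i$, where $d_y>d_w$.} Lowering the degree of $w$ increases $\Psi$, since $d_y>d_w>d_w-s$.

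\emph{(v) Edges $wy$ with $y\ne v$ and $y$ not a $w_i$, where $d_y\le d_w$.} By the definition of the $w_i$, such a $y$ must lie in $N_G(v)$. This group is the main obstacle, and it is exactly the gap in the original argument, since $\Psi(d_w-s,d_y)$ may be smaller than $\Psi(d_w,d_y)$.

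To handle group (v), I will pair each such edge $wy$ with the edge $vy$ from group (iii). A direct computation, valid irrespective of the relative order of $d_w-s$ and $d_y$, gives
\[
\Psi(d_w,d_y)-\Psi(d_w-s,d_y)=\frac{s}{d_y}-\frac{s\,d_y}{d_w(d_w-s)}.
\]
Subtracting this possible loss from the gain on $vy$ gives
\[
s\,d_y\Bigl(\frac{1}{d_w(d_w-s)}-\frac{1}{\Delta(\Delta+s)}\Bigr)>0,
\]
which is positive because $\Delta(\Delta+s)>d_w(d_w-s)$. Each $vy$ is used in at most one such pair, so the pairing is legitimate, and every group or pair contributes a nonnegative amount.

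Summing over all groups, with the strict gain from (i), gives $\SDD(G')-\SDD(G)>0$, which is the claimed inequality $\SDD(G)-\SDD(G')<0$.
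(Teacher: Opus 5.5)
Your proof is correct and takes essentially the paper's route. It compares the two graphs edge by edge: the edges at $v$ and the moved edges are handled by the monotonicity of $\Psi$, which is Lemma~1 of the original paper, and the restriction $d_{w_i}\le d_w$ is exactly what makes that lemma applicable. Each edge $wy$ to a common neighbour is paired with $vy$, and the paired gain is the same closed form $\frac{s\,d_y(\Delta+d_w)(\Delta-d_w+s)}{\Delta d_w(\Delta+s)(d_w-s)}>0$ that appears in Case~1 of Claim~1. The paper does not write out a proof of this lemma, and its displayed computations always move all outside neighbours. Your group (iv), the unmoved neighbours $y$ of $w$ with $d_y>d_w$, covers the one case those computations do not, and you handle it correctly.
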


\begin{proposition}\cite{AlbalahiAli2022}\label{thm:main}
If a graph $G$ possesses the largest SDD index
among all $k$-cyclic graphs of order $n$, then the maximum degree of $G$ is $n-1$, where $0\le k\le n-2$.
\end{proposition}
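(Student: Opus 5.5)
We argue by contradiction. Let $G$ maximize $\SDD$ among $k$-cyclic graphs of order $n$, and suppose its maximum degree is less than $n-1$. Fix a vertex $v$ of maximum degree $\Delta=d_v\le n-2$. Our goal is to build another $k$-cyclic graph $G'$ of order $n$ with $\SDD(G')>\SDD(G)$. We will do this with Lemma~\ref{lem:transfer}, which we may assume. The transformation in that lemma keeps the numbers of vertices and edges fixed. It also keeps $G'$ connected: each $w_i$ loses its edge to $w$ but gains an edge to $v$, and $w$ stays adjacent to $v$. So $G'$ is again $k$-cyclic. Everything therefore reduces to showing that $G$ has a neighbour $w$ of $v$ for which the set $\{x\in N_G(w)\setminus N_G[v]: d_x\le d_w\}$ is nonempty.

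First we find a vertex at distance $2$ from $v$. Since $G$ is connected and $d_v\le n-2$, some vertex lies outside $N_G[v]$. A shortest path from $v$ to such a vertex gives a vertex $w\in N_G(v)$ and a vertex $x\in N_G(w)\setminus N_G[v]$.

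If $d_x\le d_w$ for some such pair, Lemma~\ref{lem:transfer} applies directly and gives a contradiction. So the main obstacle is the remaining case: every vertex $x$ at distance $2$ from $v$ satisfies $d_x>d_w$ for every neighbour $w$ of $v$ adjacent to $x$. This is exactly the situation the original proof overlooked.

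To handle this case, we pick a vertex $x$ at distance $2$ from $v$ and a neighbour $w\in N_G(v)\cap N_G(x)$. From $d_x>d_w$ we get $d_x\ge 2$. Since $d_x\le\Delta=d_v$, we would next try to apply Lemma~\ref{lem:transfer} with the roles shifted. One option is to replace $v$ by another maximum-degree vertex. The more natural option is to look at the vertex $x$ itself: move the edges from $x$ to $w$, or to other vertices, toward $v$.

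The cleanest version of this plan uses distances. Choose $x$ to be a vertex at maximum distance from $v$, or choose the pair $(w,x)$ with $d_w$ maximum, and show that some admissible pair exists. Suppose instead that degrees strictly increase along every edge leaving $N_G(v)$ outward. Following such increases, each step goes from a vertex to a neighbour of strictly larger degree, so the walk cannot repeat a vertex. In a finite graph it must therefore terminate, and it terminates at a vertex $y\ne v$ whose degree dominates all of its neighbours. From there we would either apply Lemma~\ref{lem:transfer} with $y$ in the role of the high-degree endpoint, or compare $d_y\le d_v$ to reach a contradiction.

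If this terminating-walk argument does not close, the fallback is a direct computation. In that case we take $G'$ to be the graph obtained by moving all edges $xz$ with $z\notin N_G[v]$ to $vz$, and we bound the change in $\SDD$ using the monotonicity of $\Psi(a,b)$ in $a$ for $a\ge b$. Both routes end in a graph with larger $\SDD$ than $G$, contradicting maximality. Hence $\Delta=n-1$.
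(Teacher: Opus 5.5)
There is a genuine gap. It sits exactly in the case you yourself flag as the main obstacle: every $x\notin N_G[v]$ satisfies $d_x>d_w$ for each of its neighbours $w\in N_G(v)$. Up to that point your reduction to Lemma~\ref{lem:transfer} matches the paper, but neither of your two routes closes this case.

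The ascending walk ends at a vertex $y$ that dominates only its own neighbours, whereas the lemma needs a vertex of maximum degree. Even when $d_y=\Delta$, you are simply back in the same configuration with $y$ in place of $v$. The inequality $d_y\le d_v$ is always true, so it yields no contradiction.

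Your fallback only moves edges joining two vertices outside $N_G[v]$. At best it is a partial version of the paper's Claim~1, and even there monotonicity of $\Psi$ alone does not sign the terms at common neighbours of $v$ and $x$; the paper uses an explicit identity for that. Once $V(G)\setminus N_G[v]$ is independent, the fallback does nothing.

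A concrete test case is $G=K_{2,n-2}$ with $n\ge5$. It is $k$-cyclic with $k=n-3\le n-2$ and has $\Delta=n-2$. The only vertex at distance $2$ from a vertex of degree $n-2$ is the other vertex of degree $n-2$, and all its neighbours lie in $N_G(v)$ and have degree $2$. So the lemma never applies, with either maximum-degree pivot. Your walk stops at that other vertex, and your fallback transformation is empty. Your argument therefore produces no better graph here, although $K_{2,n-2}$ is not extremal.

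What is missing is a quantitative comparison for this case. The paper moves the edges $wx_1,\dots,wx_q$ from a neighbour $w$ of $v$ to $v$. Since $d_{x_i}>d_w$, the terms $\Psi(d_w,d_{x_i})-\Psi(\Delta+q,d_{x_i})$ in $\SDD(G)-\SDD(G')$ can be positive, and they are positive for $K_{2,n-2}$. So no monotonicity property of $\Psi$ can settle the sign. These losses have to be paid for by the gains at the other neighbours $z$ of $v$.

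Making that trade-off work requires upper bounds on $\sum_i d_{x_i}$ and on $\sum_z d_z$. The paper obtains them from three facts:
\begin{itemize}
\item $|E(G)|\le 2n-3$, which is where the hypothesis $k\le n-2$ is used;
\item the independence of $V(G)\setminus N_G[v]$;
\item $d_x\ge3$ for every $x\notin N_G[v]$ (note that $d_w\ge2$ gives $d_x\ge3$, not merely $d_x\ge2$).
\end{itemize}
It then applies Cauchy--Schwarz to $\sum_z 1/d_z$ and checks the positivity of $\Theta(p,q,\Delta)$. Your proposal never uses $k\le n-2$ at all, which by itself signals that the hard case has not been handled.
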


\begin{proof}
For $n\le 3$, there is nothing to prove. Hence, we assume that $n\ge 4$. Let $\Delta$ be the maximum degree of $G$. We suppose, to the contrary, that $\Delta\le n-2$,
and choose a vertex $v\in V(G)$ of degree $\Delta$. \\[2mm]
{\bf Claim 1.}    The
subgraph induced by $V(G)\setminus N_G[v]$ has no edges.

\begin{proof}[Proof of Claim 1.]
    Suppose that a component $Q$ of this subgraph contains an edge. Choose
$z\in V(Q)$ such that $d_z(G)=\max\{d_q(G):q\in V(Q)\}$. Then, $N_G(z)\setminus N_G[v]$ is nonempty. We note that
\begin{equation}\label{eq:Q-maximum}
d_x(G)\le d_z(G)
\quad\text{for every }x\in N_G(z)\setminus N_G[v],
\end{equation}
because every
$x\in N_G(z)\setminus N_G[v]$ belongs to $Q$.\\[2mm]
{\bf Case 1.} In $G$, the vertex $z$ has a neighbor in $N_G(v)$.\\
Let
$N_G(z)\setminus N_G[v]=\{x_1,\ldots,x_s\}$, where $s\ge1$. Form
$G'$ by removing the edges $zx_1,\dots,zx_s$ and adding the edges $vx_1,\dots,vx_s$. We note that
$d_z(G)-s\ge1$ because the set $N_G(z)\cap N_G(v)$ is nonempty, by the
assumption of the present case.
Therefore,
\begin{align}\label{eq-001}
\SDD(G)-\SDD(G')
={}&\sum_{u\in N_G(v)\setminus N_G(z)}
 \bigl[\Psi(\Delta,d_u(G))-\Psi(\Delta+s,d_u(G))\big]\nonumber\\
&+\sum_{y\in N_G(v)\cap N_G(z)}
 \bigl[\Psi(\Delta,d_y(G))+\Psi(d_z(G),d_y(G))\big]\nonumber\\
&-\sum_{y\in N_G(v)\cap N_G(z)}
 \bigl[\Psi(\Delta+s,d_y(G))
       +\Psi(d_z(G)-s,d_y(G))\big]\nonumber\\
&+\sum_{i=1}^{s}
 \bigl[\Psi(d_z(G),d_{x_i}(G))
       -\Psi(\Delta+s,d_{x_i}(G))\big].
\end{align}
Each term in the first and last sums in \eqref{eq-001} is negative by Lemma 1 of \cite{AlbalahiAli2022}. Also, we note that
\begin{align*}
&\Psi(\Delta,d_y(G))+\Psi(d_z(G),d_y(G))-\Psi(\Delta+s,d_y(G))
       -\Psi(d_z(G)-s,d_y(G))\\
&= -\frac{s d_y(G)(\Delta+d_z(G))(\Delta-d_z(G)+s)}
 {\Delta d_z(G)(\Delta+s)(d_z(G)-s)}<0.
\end{align*}
Therefore,
\eqref{eq-001} gives
$\SDD(G)-\SDD(G')<0$, contrary to the choice of $G$.\\[2mm]
{\bf Case 2.}
In $G$, the vertex $z$ has no neighbor in $N_G(v)$.\\
Let $P$ be a shortest path from $z$ to $N_G(v)$, and let $y$ be the
neighbor of $z$ on $P$. Since $z$ has no neighbor in $N_G(v)$, the path
$P$ has length at least $2$. Therefore, $y\notin N_G[v]$, and hence, $z,y\in V(Q)$. Moreover, $2\le d_y(G)\le d_z(G)$. Let
$N_G(z)\setminus\{y\}=\{x_1,\ldots,x_{d_z(G)-1}\}$. We form $G'$ by
removing all the edges $zx_1,\ldots,zx_{d_z(G)-1}$ and adding all the edges $vx_1,\ldots,vx_{d_z(G)-1}$.
Since every $x_i$ belongs to $Q$, from
\eqref{eq:Q-maximum} it follows that $d_{x_i}(G)\le d_z(G)$. Here, we have
\begin{align}\label{eq-002}
\SDD(G)-\SDD(G')
={}&\sum_{u\in N_G(v)}
 \bigl[\Psi(\Delta,d_u(G))
       -\Psi(\Delta+d_z(G)-1,d_u(G))\big]\nonumber\\
&+\sum_{i=1}^{d_z(G)-1}
 \bigl[\Psi(d_z(G),d_{x_i}(G))
       -\Psi(\Delta+d_z(G)-1,d_{x_i}(G))\big]\nonumber\\
&+\Psi(d_z(G),d_y(G))-\Psi(1,d_y(G)).
\end{align}
Each term in the first sum of \eqref{eq-002} is negative by Lemma 1 of \cite{AlbalahiAli2022}. Also, for every $i\in\{1,\dots,d_z(G)-1\}$, we note that $\Psi(d_z(G),d_{x_i}(G))
       -\Psi(\Delta+d_z(G)-1,d_{x_i}(G))$ strictly increases in $d_{x_i}(G)$, and hence,
       $$\Psi(d_z(G),d_{x_i}(G))
       -\Psi(\Delta+d_z(G)-1,d_{x_i}(G)) \le \Psi(d_z(G),d_{z}(G))
       -\Psi(\Delta+d_z(G)-1,d_{z}(G)).$$
       Moreover, by Lemma 1 of \cite{AlbalahiAli2022}, we have $-\Psi(\Delta+d_z(G)-1,d_{z}(G))\le-\Psi(2d_z(G)-1,d_{z}(G))$.
        Finally, we note that $\Psi(d_z(G),d_y(G))-\Psi(1,d_y(G))$ strictly decreases in $d_y(G)$. Hence, \eqref{eq-002} yields
\begin{align*}
\SDD(G)-\SDD(G')
<{}&(d_z(G)-1)
 \bigl[\Psi(d_z(G),d_{z}(G))
-\Psi(2d_z(G)-1,d_{z}(G))\big]\\
&+\Psi(d_z(G),2)-\Psi(1,2)=-\frac{(d_z(G)-1)(5d_z(G)-2)}{2d_z(G)(2d_z(G)-1)}<0,
\end{align*}
which is again a contradiction.

 In both cases, we arrive at a contradiction. Therefore, $V(G)\setminus N_G[v]$ is an independent set. This completes the proof of Claim 1.
\end{proof}

Now, let $x\notin N_G[v]$. Since $G$ is connected and
$V(G)\setminus N_G[v]$ is independent (by Claim 1), the vertex $x$ has a neighbor
$w\in N_G(v)$.  If
$d_x(G)\le d_w(G)$, then the set occurring in
Lemma~\ref{lem:transfer} is nonempty because it contains $x$, and hence, by Lemma~\ref{lem:transfer} there exists a $k$-cyclic graph of order $n$ such that $\SDD(G)-\SDD(G')<0$, which contradicts the extremal choice of
$G$. Hence, in what follows, we assume that
\begin{equation}\label{eq:degree-comparison}
d_x(G)>d_w(G)
\quad\text{whenever }w\in N_G(v),\ x\notin N_G[v],\text{ and }wx\in E(G).
\end{equation}
Since $w$ is a common neighbor of $v$ and $x$, we have
$d_w(G)\ge2$. Thus, (4) gives
$d_x(G)\ge d_w(G)+1\ge3$, and hence
\begin{equation}\label{eq:outside-degree}
d_x(G)\ge3\quad\text{for every }x\notin N_G[v].
\end{equation}
Also,
\begin{equation}\label{eq-new-22}
r:=n-\Delta-1=|V(G)\setminus N_G[v]|.
\end{equation}
Since $\Delta\le n-2$, we have $r\ge1$. Since $G$ is connected, there exists a vertex
$w\in N_G(v)$ having a neighbor outside $N_G[v]$. We fix such a vertex
$w$ and let
$
N_G(w)\setminus N_G[v]=\{x_1,\ldots,x_q\}$
and $p=1+|N_G(w)\cap N_G(v)|.
$
Certainly, $q\ge1$. We note that the set $N_G(w)$ is partitioned into the vertex
$\{v\}$, the subset  of $N_G(w)\cap N_G(v)$ consisting of $p-1$ elements, and
$\{x_1,\ldots,x_q\}$. Therefore,
$d_w(G)=1+(p-1)+q=p+q$. Hence,
$\Delta-p\ge q$. Also,
the assumption $k\le n-2$ gives
\begin{equation}\label{eq-new-1}
|E(G)|=n-1+k\le2n-3.
\end{equation}
Form $G'$ by removing the edges $wx_1,\ldots,wx_q$ and adding the
edges $vx_1,\ldots,vx_q$.
Hence, we have
\begin{align}\label{eq-new-005}
\SDD(G)-\SDD(G')
={}&\sum_{z\in N_G(v)\setminus N_G[w]}
 \bigl[\Psi(\Delta,d_z(G))-\Psi(\Delta+q,d_z(G))\big]\nonumber\\
&+\sum_{z\in N_G(v)\cap N_G(w)}
 \bigl[\Psi(\Delta,d_z(G))+\Psi(d_w(G),d_z(G))\big]\nonumber\\
&-\sum_{z\in N_G(v)\cap N_G(w)}
 \bigl[\Psi(\Delta+q,d_z(G))+\Psi(p,d_z(G))\big]\nonumber\\
&+\Psi(\Delta,d_w(G))-\Psi(\Delta+q,p)+\sum_{i=1}^{q}
 \bigl[\Psi(d_w(G),d_{x_i}(G))
       -\Psi(\Delta+q,d_{x_i}(G))\big].
\end{align}
Since $d_w(G)=p+q$, we have
\begin{align*}
&\Psi(\Delta,d_z(G))+\Psi(d_w(G),d_z(G))-\Psi(\Delta+q,d_z(G))-\Psi(p,d_z(G)) \\
&=-\frac{q d_z(G)(\Delta+p+q)(\Delta-p)}
 {p\Delta (p+q)(\Delta+q)}<0,
\end{align*}
and hence, from \eqref{eq-new-005}, it follows that
\begin{align}\label{eq:main-difference}
\SDD(G)-\SDD(G')
\le{}&\sum_{z\in N_G(v)\setminus N_G[w]}
\bigl[\Psi(\Delta,d_z(G))-\Psi(\Delta+q,d_z(G))\big]\nonumber\\
&+\Psi(\Delta,d_w(G))-\Psi(\Delta+q,p)+\sum_{i=1}^{q}
 \bigl[\Psi(d_w(G),d_{x_i}(G))
       -\Psi(\Delta+q,d_{x_i}(G))\big]\nonumber\\
={}&-q\sum_{z\in N_G(v)\setminus N_G[w]}
 \left(\frac1{d_z(G)}-\frac{d_z(G)}{\Delta(\Delta+q)}\right)\nonumber\\
&-\frac{q(\Delta+p+q)^2(\Delta-p)}
 {p\Delta (p+q)(\Delta+q)}-(\Delta-p)\sum_{i=1}^{q}
 \left(\frac1{d_{x_i}(G)}-
 \frac{d_{x_i}(G)}{(p+q)(\Delta+q)}\right).
\end{align}

Let $h$ be
the number of edges whose two end-vertices lie in $N_G(v)$, and let $e$ be
the number of those edges whose one end-vertex belongs to $N_G(v)$ and the other one belongs to $V(G)\setminus N_G[v]$. Since $V(G)\setminus N_G[v]$ is independent (by Claim 1), every edge of $G$ belongs to exactly one of
the following three categories: the $\Delta$ edges incident with $v$, the
$h$ edges having both end-vertices in $N_G(v)$, and the $e$ edges
joining $N_G(v)$ to $V(G)\setminus N_G[v]$. Therefore,
$|E(G)|=\Delta+h+e$, which together with \eqref{eq-new-22} and  \eqref{eq-new-1} yields
$\Delta+h+e\le2(\Delta+r+1)-3,$
which is equivalent to
\begin{equation}\label{eq:edge-count}
h+e\le\Delta+2r-1.
\end{equation}
Because $V(G)\setminus N_G[v]$ is independent (by Claim 1), the number $e$ is also
the sum of the degrees of the $r$ vertices outside $N_G[v]$. Each of
these degrees is at least $3$ by \eqref{eq:outside-degree}. Hence,
$e\ge3r$, which together with \eqref{eq:edge-count} gives
\begin{equation}\label{eq-new-03}
h\le\Delta-r-1.
\end{equation}
Every vertex outside $N_G[v]$ but different from $x_1,\ldots,x_q$ has
degree at least $3$ (because of  \eqref{eq:outside-degree}). Moreover, the $p-1$ common neighbors of $v$ and
$w$ give $p-1$ distinct edges with both end-vertices in $N_G(v)$, namely the edges joining
$w$ to these common neighbors. Thus, $h\ge p-1$, and hence, \eqref{eq:edge-count} yields
\begin{equation}\label{eq:edge-count-new}
e\le\Delta-p+2r.
\end{equation}
Since $e=\sum_{\alpha\in V(G)\setminus N_G[v]}d_{\alpha}(G)\ge\sum_{i=1}^{q}d_{x_i}(G)+3(r-q)$ and $q\le r$ (because $x_1,\ldots,x_q$ are among the $r$ vertices outside
$N_G[v]$), using \eqref{eq:edge-count-new}, we have
\begin{align}
\sum_{i=1}^{q}d_{x_i}(G)
&\le e-3(r-q)\le\Delta-r-p+3q\le\Delta-p+2q.
\label{eq:sum-x-upper}
\end{align}

On the other hand, for every $i\in\{1,\dots,q\}$, \eqref{eq:degree-comparison} gives
$d_{x_i}(G)\ge d_w(G)+1$, which together with
\eqref{eq:sum-x-upper} yields
\begin{equation}\label{eq:delta-p-lower}
q(d_w(G)+1)\le\sum_{i=1}^{q}d_{x_i}(G)
\le\Delta-r-p+3q,
\qquad \Delta-p\ge q(d_w(G)-1).
\end{equation}

We next estimate the sum of the degrees over
$N_G(v)\setminus N_G[w]$. In the sum of the degrees of the vertices in
$N_G(v)$, each of the $\Delta$ edges joining $v$ to $N_G(v)$ is counted
once, each of the $h$ edges within $N_G(v)$ is counted twice, and each
of the $e$ edges joining $N_G(v)$ to the vertices outside $N_G[v]$ is
counted once. Hence, this degree sum is $\Delta+2h+e$.

The set $N_G(v)\setminus N_G[w]$ is obtained from $N_G(v)$ by deleting
$w$ and the $p-1$ common neighbors of $v$ and $w$. The degree of $w$ is
$p+q$, while each common neighbor has degree at least $2$, because
it is adjacent to both $v$ and $w$. It follows that
\begin{align}
\sum_{z\in N_G(v)\setminus N_G[w]}d_z(G)
&\le\Delta+2h+e-p-q-2(p-1)\le3(\Delta-p)+r-q,
\label{eq:sum-z-first}
\end{align}
where the last inequality follows from \eqref{eq:edge-count} and \eqref{eq-new-03},

The first two inequalities in \eqref{eq:delta-p-lower} also give
$r\le\Delta-p-q(d_w(G)-2)$, which together with
\eqref{eq:sum-z-first} yields
\begin{equation}\label{eq:sum-z-upper}
\sum_{z\in N_G(v)\setminus N_G[w]}d_z(G)
\le4(\Delta-p)-q(d_w(G)-1)<4(\Delta-p).
\end{equation}

There are $\Delta-p$ vertices in $N_G(v)\setminus N_G[w]$: from the
$\Delta$ vertices in $N_G(v)$, one deletes $w$ and its $p-1$ common
neighbors with $v$. The Cauchy--Schwarz inequality gives
\[
\sum_{z\in N_G(v)\setminus N_G[w]}\frac1{d_z(G)}
\ge
\frac{(\Delta-p)^2}
{\displaystyle\sum_{z\in N_G(v)\setminus N_G[w]}d_z(G)},
\]
which together with \eqref{eq:sum-z-upper} gives
\begin{equation}\label{eq-new004}
\sum_{z\in N_G(v)\setminus N_G[w]}\frac1{d_z(G)}
>\frac{\Delta-p}{4}.
\end{equation}
%Also, \eqref{eq:sum-z-upper} gives
%\[
%-\sum_{z\in N_G(v)\setminus N_G[w]}
%\frac{d_z(G)}{\Delta(\Delta+q)}
%>-\frac{4(\Delta-p)}{\Delta(\Delta+q)}.
%\]
From \eqref{eq:sum-z-upper}
and \eqref{eq-new004}, it follows that   \begin{align}
q\sum_{z\in N_G(v)\setminus N_G[w]}
\left(\frac1{d_z(G)}-\frac{d_z(G)}{\Delta(\Delta+q)}\right)
\ge(\Delta-p)
\left(\frac q4-\frac{4q}{\Delta(\Delta+q)}\right).
\label{eq:first-sum-bound}
\end{align}
Also, we note that
\begin{align*}
(\Delta-p)\sum_{i=1}^{q}
\left(\frac1{d_{x_i}(G)}-
\frac{d_{x_i}(G)}{(p+q)(\Delta+q)}\right)>(p-\Delta)\sum_{i=1}^{q}
\frac{d_{x_i}(G)}{(p+q)(\Delta+q)},
\end{align*}
which together with \eqref{eq:sum-x-upper} yields
\begin{align}
(\Delta-p)\sum_{i=1}^{q}
\left(\frac1{d_{x_i}(G)}-
\frac{d_{x_i}(G)}{(p+q)(\Delta+q)}\right)
>{}-\frac{(\Delta-p)(\Delta-p+2q)}{(p+q)(\Delta+q)}.
\label{eq:second-sum-bound}
\end{align}

Using \eqref{eq:first-sum-bound} and \eqref{eq:second-sum-bound} in
\eqref{eq:main-difference}, we obtain \begin{align}
\SDD(G)-\SDD(G')
<{}&(p-\Delta)\left\{
\frac q4-\frac{4q}{\Delta(\Delta+q)}
-\frac{\Delta-p+2q}{(\Delta+q)(p+q)}
+\frac{q(\Delta+p+q)^2}{p\Delta (p+q)(\Delta+q)}
\right\}.
\label{eq:final-bound}
\end{align}
We denote by  $\Theta(p,q,\Delta)$ the expression within braces ``\{...\}'' in
\eqref{eq:final-bound}.
Then,
\begin{align}\label{eq-finall}
4\Delta p(p+q)(\Delta+q)\,\Theta(p,q,\Delta)
={}&\bigl[pq(p+q)-4(p-q)\bigr](\Delta-p-q)^2\nonumber\\
&+\bigl[2p^3q+p^2(5q^2-4)+3pq^3+16q^2\bigr]
(\Delta-p-q)\nonumber\\
&+q(p+q)\bigl[p(p+q)(p+2q)-12p+16q\bigr].
\end{align}
We recall that $\min(p,q)\ge1$ and $d_w(G)=p+q\le\Delta$. Since
\[
p(p+q)(p+2q)-12p+16q
\ge p(p+1)(p+2)-12p+16=p\bigl[(p+1)(p+2)-12\bigr]+16>0,
\]
$pq(p+q)-4(p-q)
\ge p(p+1)-4(p-1)
=p^2-3p+4>0,$ and $2p^3q+p^2(5q^2-4)+3pq^3+16q^2>0$,
from \eqref{eq-finall}, we have $\Theta(p,q,\Delta)>0$, and hence, \eqref{eq:final-bound} yields $\SDD(G)-\SDD(G')<0$, a contradiction.
\end{proof}

\end{document}